\g@addto@macro\th@plain{\thm@headpunct{}}
\theoremstyle{plain}
\newtheorem{theorem}{Theorem}[section]
\newtheorem{corollary}[theorem]{Corollary}
\newtheorem{proposition}[theorem]{Proposition}
\newtheorem{lemma}[theorem]{Lemma}
\newtheorem{conjecture}[theorem]{Conjecture}
\newtheorem{definition}[theorem]{Definition}
\newtheorem{note}[theorem]{Note}
\DeclarePairedDelimiter\abs{\lvert}{\rvert}
\DeclarePairedDelimiter\norm{\lVert}{\rVert}
\let\oldabs\abs
\def\abs{\@ifstar{\oldabs}{\oldabs*}}
\let\oldnorm\norm
\def\norm{\@ifstar{\oldnorm}{\oldnorm*}}
\newcommand*\bigcdot{\mathpalette\bigcdot@{.5}}
\newcommand*\bigcdot@[2]{\mathbin{\vcenter{\hbox{\scalebox{#2}{$\m@th#1\bullet$}}}}}
\providecommand{\keywords}[1]{\textbf{\textit{Keywords:}} #1}
\providecommand{\subjclass}[1]{\textbf{\textit{MSC Codes:}} #1}
\DeclareMathOperator{\diag}{diag}
\DeclareMathOperator{\tr}{tr}
\begin{document}

\title{Zeroing Diagonals, Conjugate Hollowization, and Characterizing Nondefinite Operators}
\author{David~R. Nicholus\thanks{drnicholus@gmail.com}}

\affil{Chicago Transit Authority\\
  William Rainey Harper College, Department of Mathematics}
  
\maketitle

\begin{abstract}
We prove the conjecture by Damm and Fa\ss bender that, for real traceless matrices $L,M$, there exists orthogonal $R$ such that $\diag(R^\top L R) = (0,...,0,0,0)$ and $\diag(R M R^\top) = (0,...,0,*,*)$. We also prove for any pair $L,M$ of complex Hermitian traceless matrices, there exists a unitary~$U$ such that $\diag(U^* L U) =\diag(U M U^*) = (0,...,0)$. The claims comprise a corollary to our more general theorem for $L,M$ of arbitrary trace. We also discuss severe limitations upon generalizing our theorem to general complex $L,M$.

By setting $L = M$, much is revealed concerning freedom and constraint involved in introducing 0s to the diagonal of a single operator. From this we prove a novel characterization of real traceless matrices and complex Hermitian traceless matrices, strengthening the seminal theorem by Fillmore that every complex square matrix is unitarily similar to a hollow matrix.

Our results are contextualized in a characterization of nondefinite matrices as a more general environment for introducing 0s to the main diagonal.
\end{abstract}

\keywords{traceless, hollow, almost hollow, hollowization, hollowisation, hollowizable, hollowisable, nondefinite, zeroing diagonals, zero diagonal, zero principal diagonal, constant diagonal, conjugate hollowization, stabilization}\\

\subjclass{65F25, 15A21, 15B10, 15B57, 15A23, 15B99, 15A86, 15A60, 93D15}


\section{Introduction} \label{Sect:intro} 


A square matrix whose main diagonal consists only of 0s is a \emph{hollow matrix}. \cite{zC13,mF15,hK16,dN23,tD20,aN18,rB19,jG17} For example, all antisymmetric matrices are hollow, all graphs without loops have hollow adjacency matrices (so all tournament matrices are hollow), all conference matrices are hollow, and all traceless antidiagonal matrices are hollow. A similarity decomposition transforming a matrix into a hollow form is a \emph{hollowization}. \cite{tD20,aN18,dN23} Hollowization can be thought of as complementary to diagonalization, as hollow forms are the complement to diagonal forms. Early research focusing on hollow matrices and transformations into them was published by Horn and Schur. \cite{aH54,iS23} Some of the most notable results concerning transformations into hollow forms were published later by Fillmore in \cite{pF69} implying every traceless complex square matrix is unitarily similar to a hollow matrix, and we derive stronger versions of them here. Both pure and applied interest in hollow matrices and hollowization has led to an influx of research in recent years. \cite{zC13,mF15,hK16,dN23,aN18,tD20,rB19} Symmetric hollow matrices of various forms have been studied in \cite{zC13,mF15,hK16}, research regarding hollow orthogonal matrices is provided in \cite{rB19} continuing from \cite{jG67,pD71}, and various hollowizations to quasidiagonal forms are reviewed in \cite{dN23}. In \cite{aN18}, Neven and Bastin use related results from \cite{rT79} to prove the general separability problem of mixed states in quantum mechanics reduces to determining if a set of symmetric matrices representing the states is simultaneously unitarily hollowizable.

The importance of introducing 0s into matrices and vectors to numerical linear algebra and matrix analysis, especially below and above main diagonals, cannot be overstated. Indeed, Trefethen and Bau note ``The algorithms of numerical linear algebra are mainly built upon one technique used over and over again: putting zeros into matrices.'' \cite{lT97} Nonetheless, comparatively little research has explored matrix-theoretic hollowization. Indeed, of and in \cite{tD20}, Damm and Fa\ss bender note ``to the best of our knowledge, the current note is the first to treat hollowization problems from the matrix theoretic side''. Part of the purpose of this paper is to fill this void.

Whereas an eigendecomposition reveals a basis of orthogonal eigenvectors, a hollowization reveals a basis of orthogonal \emph{neutral vectors} -- vectors for which the quadratic form is 0. This is one of the reasons hollow matrices and hollowizations are useful in asymptotic eigenvalue determination and stabilization of linear systems. In \cite{tD20}, Damm and Fa\ss bender use hollowization to prove results on stabilization of linear systems by rotational forces or by noise, which themselves have seen recent utility for stochastic partial differential equations and Hamiltonian systems. \cite{nN00,rB86,tK19} The authors prove, for any pair $L,M$ of real traceless matrices, there exists an orthogonal~$R$ such that $R^\top L \, R$ is hollow and $R^\top M \, R$ is almost hollow. This form of hollowization along with related results is used first to provide a constructive proof of the theorem by Brickman in \cite{lB61} that the real joint numerical range of a pair of matrices is convex, then to prove every real traceless matrix of even size is hollowizable by an operator that is orthogonal and symplectic, then to develop a survey of results on unitary hollowization and simultaneous unitary hollowization, and finally to prove theorems on the stabilization of linear systems.

Most notably in \cite{tD20}, Damm and Fa\ss bender pose their Conjecture 22 that for every pair $L,M$ of real traceless matrices, there exists an orthogonal $R$ such that $R^\top L \, R$ is hollow and $R M R^\top$ is almost hollow.\footnote{Notice the transformation applied to $M$ is the inverse of that applied to~$L$.} Our paper proves their conjecture, as well as a stronger equivalent for complex Hermitian matrices, in Corollary \ref{C:conject22}.

Our results are expressed in the more general context of \emph{zeroing diagonals} -- the process of introducing zeros to main diagonals via similarity transformation; after discussing definitions, preliminary concepts, and conventions in Section \ref{Sect:prelim}, our paper introduces the proper context for this in Section \ref{Sect:nondefZero}, which is that of \emph{nondefinite operators} -- operators that are neither positive definite nor negative definite. Roughly speaking, zeroing diagonals is to nondefinite operators what hollowization is to traceless operators. Indeed, Proposition \ref{P:zdndEquiv} shows an operator being nondefinite, containing 0 in its numerical range, and being able to have any diagonal element be unitarily zeroed are all equivalent. We finish the section discussing some considerations on \emph{conjugate zeroing}, whereby a nonsingular operator $V$ introduces diagonal zeros to a pair of operators $L,M$ via $(V^{-1} L V,V M V^{-1})$.

In Section \ref{Sect:holTraceless} we review Fillmore's theorem that a complex matrix $M$ can have all but one diagonal element unitarily zeroed if and only if $\tr(M) \in W(M)$, as it and its proof are useful for claims that follow. In Section \ref{Sect:conjHol} we review claims concerning joint numerical ranges and prove three lemmas regarding conjugate zeroing as stairs to prove our main theorem. Lemma \ref{L:jointSpecMeans} generalizes Proposition~5 from \cite{tD20}, Lemma \ref{L:hHComp} reveals how Householder matrices will be critical to proving our main theorem, and Lemma \ref{L:conZer} gives a first glimpse of our conjugate hollowization scheme as well as base cases for our main theorem. We then prove the culmination of our work, Theorem~\ref{T:conHol}; letting $L,M \in \mathbb{C}^{n \times n}$, $l = \frac{\tr L}{n}$, and $m = \frac{\tr M}{n}$, if $L,M$ are real, then there exists orthogonal $R$ such that $\diag(R^\top L\, R) = (l,...,l,l,l)$ and $\diag(R \, M \,R^\top) = (m,...,m,*,*)$, and if $L,M$ are Hermitian, then there exists unitary $U$ such that $\diag(U^* L\, U) = (l,...,l)$ and $\diag(U M U^*) = (m,...,m)$. Conjecture~22 from \cite{tD20} and its stronger Hermitian equivalent follow in Corollary \ref{C:conject22} from $\tr(L) = \tr(M) = 0$. Note \ref{N:reqAss} discusses why a complete conjugate hollowization is not, in general, possible in the real case and why the Hermitian assumption cannot, in general, be weakened in the complex case. We end with Corollary \ref{C:conHolOne} where we derive novel characterizations of real traceless matrices and of Hermitian traceless matrices, acting as strengthened forms of some of Fillmore's claims from \cite{pF69}. Finally, in Section \ref{Sect:conc}, we summarize our results, suggest directions for future research into more general contexts for introducing 0s to diagonals, and submit our own Conjecture \ref{Con:extConjHol}.

Unless specified otherwise, matrices in this paper are complex square matrices, linear transformations are linear operators (isomorphisms), and similarity transformations are unitary.\footnote{Of course, ``complex'' includes ``real'' and ``unitary'' includes ``orthogonal'' as subcases, and many of our results pertain specifically to these subcases.}  A matrix or operator of \emph{size} $n$ refers to an $n \times n$ matrix. It should be understood zeroing refers to unitarily zeroing diagonal elements, and hollowization refers to unitary hollowization. Some of the claims proven in this paper can be extended to operators over any field of characteristic 0 and can be extended to nonsquare matrices in some way. We leave a detailed exploration for future research.

The diagonal always refers to the main diagonal, the identity matrix of size $n$ is denoted~$I_n$, and ``conjugation'' by a nonsingular matrix refers to a similarity transformation.

\section{Preliminaries and Definitions} \label{Sect:prelim} 

We first introduce some useful definitions and concepts.

\begin{definition} [Hollow, Almost Hollow, Zeroing, Hollowization] \label{D:holDefs}
	Let $V,M$ be matrices of size $n$ where $V$ is nonsingular.
	\begin{enumerate}[(i)]
	\item $M$ is {\bfseries hollow} if and only if every diagonal element of $M$ is 0. \cite{zC13,mF15,hK16,dN23,tD20,aN18,rB19,jG17}
	\item $M$ is {\bfseries almost hollow} if and only if $M$ is traceless and every diagonal element of $M$ is 0 except, at most, the last two. \cite{tD20,dN23}
	\item {\bfseries Zeroing} the diagonal or diagonal elements refers to introducing 0s to the diagonal via similarity transformation. 
	\item Similarity transformation $\mathcal{M} = V^{-1} M V$ is a {\bfseries hollowization} of $M$ with (nonsingular) \emph{hollowizer} $V$ if and only if $\mathcal{M}$ is hollow. If such a transformation exists, $M$ is \emph{hollowizable}. If $V$ is unitary/orthogonal, $V$ is a \emph{unitary/orthogonal hollowizer} for the \emph{unitary/orthogonal hollowization} of $M$. \cite{dN23,tD20,aN18} 
	\end{enumerate}
\end{definition}

A set of matrices is \emph{simultaneously hollowizable} if and only if each matrix in the set can be hollowized by the same matrix. \cite{dN23,tD20,aN18} 

The following definition is justified by Conjecture 22 from \cite{tD20} and our proof of it in Corollary~\ref{C:conject22}$(a)$. It is also useful for Corollary~\ref{C:conject22}$(b)$.

\begin{definition} [Conjugate Zeroing and Hollowization] \label{D:twiHollowize}
	Introducing 0s to the diagonals of $L,M$ via the similarity transformations $(\mathcal{L}, \mathcal{M}) = (V^{-1} L V,V M V^{-1})$ with nonsingular $V$ is a {\bfseries conjugate zeroing} of diagonal elements of $L,M$, and the conjugate zeroing is a {\bfseries conjugate hollowization} of $L,M$ if and only if $\mathcal{L},\mathcal{M}$ are hollow.	
\end{definition}

The subtle but important difference between a simultaneous zeroing and a conjugate zeroing is that the order of the zeroing operator $V$ is flipped in the latter, so the transformations applied to $L,M$ are inverses of each other. That is,

\indent \indent $(V^{-1} L V,V^{-1} M V)$ is a simultaneous zeroing, whereas 

\indent \indent $(V^{-1} L V,V M V^{-1})$ is a conjugate zeroing.

By setting $M = I$, we can see both simultaneous zeroing and conjugate zeroing are generalizations of zeroing. However, unlike simultaneous zeroing, the case $L = M$ is nontrivial for conjugate zeroing, and the definitions for conjugate zeroing and conjugate hollowization of a single operator follow. This case is explored at the end of Section \ref{Sect:conjHol}.

The zeroing and hollowization operations considered in this paper are unitary or orthogonal, so we use $U$ and $R$, respectively, to denote them, and the conjugate transpose and transpose, respectively, to indicate inverses.


\section{Zeroing Diagonal Elements and Nondefinite Operators} \label{Sect:nondefZero} 

\subsection{Invariances and Equivalences for Zeroing Diagonal Elements} \label{Subsect:invEquiv} 

Let $M,V$ be two matrices of the same size where $V$ is nonsingular, and let $c$ be a scalar. First, note zeroing diagonal elements is scale-invariant, so that zeroing diagonal elements of $M$ is equivalent to zeroing diagonal elements of $cM$ for nonzero $c$. A note generalizing Remark 2.4$(c)$ from \cite{tD20} will also be useful.

\begin{note} [Sufficiency of Symmetric Matrices] \label{N:sufSym}
If $M \in \mathbb{R}^{n \times n}$ and $V$ is orthogonal, the diagonals of $V^\top M V$ and $V^\top (M + M^\top) V$ differ only by a scalar multiple. Consequently, zeroing diagonal elements of $M$ and zeroing diagonal elements of $M + M^\top$ are equivalent, and it is sufficient to consider only symmetric matrices when orthogonally zeroing diagonal elements of real matrices.
\end{note}

In other words, orthogonally zeroing diagonal elements and thus, orthogonal hollowization, are symmetrization-invariant. Crucially, however, unitarily zeroing complex diagonal elements is \emph{not} necessarily Hermitization-invariant. For example, $M = \imath I$ is not hollow whereas $M + M^*$ is hollow.

Another useful note generalizes remarks made from \cite{tD20}.

\begin{note} [Invariance Across Translation of the Diagonal] \label{N:transDiag} 
Conjugation commutes with uniform translation of the diagonal; that is, $V^{-1} (M + cI) V = V^{-1} M V + cI$. Thus, finding a conjugation of $M$ where $k$ diagonal elements are $c$ is equivalent to zeroing the same $k$ diagonal elements of $M - c I$.
\end{note}

In particular, hollowization is equivalent to finding a conjugation with constant diagonal, so we use the terms ``hollowization'' and ``zeroing'' to refer to both throughout this paper.


Finally, since we consider unitary transformations for complex $M$ as well as orthogonal transformations for real $M$, we note a well-known result that smoothens transitions between the two.

\begin{note} [Equality of Unitary and Orthogonal Similarity Classes for Real Matrices] \label{N:realUnitary} 
Two real matrices are unitarily similar if and only if they are orthogonally similar. In particular, if ${M \in \mathbb{R}^{n \times n}}$ and $k$ diagonal elements of $M$ can be unitarily zeroed, then the same $k$ elements can be orthogonally zeroed. \cite{rH12,rM86}
\end{note}

\subsection{Nondefinite Operators} \label{Subsect:nondefOps} 

Orthogonally or unitarily zeroing diagonal elements appears in \cite{tD20,aN18,pF69} in the context of zeroing the diagonal of traceless operators, thus manifesting as hollowization. However, for reasons that will become clear, the most general context for zeroing diagonal elements is that of nondefinite operators.\footnote{\emph{Nondefinite} is to be distinguished from \emph{indefinite}, where an operator is indefinite if and only if it is not positive-semidefinite and not negative-semidefinite.}

\begin{definition} [Definite] \label{D:definite}
	An operator is {\bfseries definite} if and only if it is positive-definite or negative-definite. An operator is {\bfseries nondefinite} if and only if it is not definite.
\end{definition}

Informally speaking, Hermitian/symmetric definite matrices are associated with matrices that have relatively ``weighty'' diagonals. In numerical analysis and linear algebra, such matrices do not require pivoting, which is a blessing due to its associated high computational costs and tendency to destroy matrix structure. Moreover, Cholesky factorization, a stable factorization, is available for such matrices. \cite{gG96} Meanwhile, indefinite matrices are studied extensively in saddle-point problems. Not only are saddle-point matrices indefinite, but symmetric saddle-point matrices are typically highly indefinite as they tend to have many eigenvalues of both signs. \cite{mB05}

A plethora of characterizations of nondefinite operators are easily derived from characterizations of positive definite operators. \cite{rB09,rH12,gG96,aL05} Thus, the following equivalences, whose proofs are straightforward and omitted, are useful.

\begin{note} [Nondefinite Equivalences] \label{N:npdnndEquiv} 
Let $M$ be a linear operator. The following are equivalent.
\begin{enumerate}[(a)]
\item $M$ is nondefinite.
\item $M$ is indefinite or singular.
\item $M$ and $-M$ are not positive-definite.
\end{enumerate}
\end{note}

Since the Hermitian part of a complex matrix $M$ defines the same quadratic form $M$ does, a complex matrix inherits its definiteness from any Hermitization of it and similarly for real matrices and any corresponding symmetrization. \cite{gG96} Characterizations of positive definite Hermitian matrices in terms of their principal minors, such as Sylvester's criterion, entail a useful characterization of nondefinite matrices. \cite{rH12,rB09} 

\begin{note} [Principal Submatrix Characterization of Nondefinite Matrices] \label{N:prinSubNondef} 
Operator $M \in \mathbb{C}^{n \times n}$ is nondefinite if and only if some principal submatrix of some order of $M + M^*$ is nondefinite.
\end{note}

Notice Note \ref{N:prinSubNondef} implies $M$ is nondefinite if and only if $M + M^*$ is nondefinite. We can also see how Note \ref{N:prinSubNondef} implies a general abundance of nondefinite operators.

Given $M \in \mathbb{C}^{n \times n}$, recall the \emph{numerical range} of $M$ is $W(M)=\{ \boldsymbol{v}^* M \boldsymbol{v} : \boldsymbol{v} \in {\mathbb{C}}^{n}, \boldsymbol{v}^* \boldsymbol{v}=1\}$, the \emph{real numerical range} of $M$ is $W_\mathbb{R}(M)=\{ \boldsymbol{v}^\top M \boldsymbol{v} : \boldsymbol{v} \in {\mathbb{R}}^{n}, \boldsymbol{v}^\top \boldsymbol{v}=1\}$, and $M$ is Hermitian if and only if $W_\mathbb{R}(M) = W(M)$.

\begin{proposition} [Zeroing Diagonals and Nondefinite Equivalences] \label{P:zdndEquiv} 
Let $M \in \mathbb{C}^{n \times n}$. The following are equivalent.
\begin{enumerate}[(a)]
\item $M$ is nondefinite.
\item $0 \in W(M)$.
\item Any diagonal element of $M$ can be unitarily zeroed.

If $M$ is real, then any diagonal element of $M$ can be orthogonally zeroed.
\end{enumerate}
\end{proposition}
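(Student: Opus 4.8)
The plan is to prove the chain of equivalences (a) $\Leftrightarrow$ (b) $\Leftrightarrow$ (c), leaning on Note \ref{N:sufSym} to reduce to the symmetric case and on the standard spectral theory of real symmetric matrices. By Note \ref{N:sufSym}, $W(M) = \tfrac{1}{2} W(M + M^\top)$ up to the scaling of the quadratic form, and orthogonally zeroing a diagonal element of $M$ is equivalent to doing so for $M + M^\top$; and by Note \ref{N:npdnndEquiv}(a) nondefiniteness of $M$ is by definition inherited from its symmetrization. So it suffices to prove the three conditions equivalent for a real symmetric matrix $S = M + M^\top$, where I may write $S = Q \diag(\lambda_1,\dots,\lambda_n) Q^\top$ with $Q$ orthogonal and $\lambda_1 \le \cdots \le \lambda_n$ the eigenvalues.

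For (a) $\Rightarrow$ (b): if $S$ is nondefinite then it is not positive-definite and not negative-definite, so $\lambda_1 \le 0 \le \lambda_n$. Then the continuous function $t \mapsto \boldsymbol{v}(t)^\top S \boldsymbol{v}(t)$ along a unit-speed great-circle path from the eigenvector for $\lambda_1$ to the eigenvector for $\lambda_n$ takes values $\lambda_1$ and $\lambda_n$, hence by the intermediate value theorem takes the value $0$; thus $0 \in W(S) = W(M)$ after rescaling. (Equivalently, $W(S) = [\lambda_1, \lambda_n]$, a standard fact, and $0$ lies in this interval.) For (b) $\Rightarrow$ (c): if $0 = \boldsymbol{u}^\top S \boldsymbol{u}$ for some unit vector $\boldsymbol{u}$, extend $\boldsymbol{u}$ to an orthonormal basis, i.e. pick orthogonal $V$ whose first column is $\boldsymbol{u}$; then $(V^\top S V)_{1,1} = \boldsymbol{u}^\top S \boldsymbol{u} = 0$. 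To zero the $i$-th diagonal element rather than the first, compose with a signed permutation (permutation suffices here) swapping coordinates $1$ and $i$, which by the diagonal-multiset invariance noted in Section \ref{Subsect:invEquiv} keeps the zero on the diagonal but relocates it; so any chosen diagonal entry can be orthogonally zeroed. For (c) $\Rightarrow$ (a): if $S$ is definite, say positive-definite, then for \emph{every} orthogonal $V$ and every $i$ we have $(V^\top S V)_{i,i} = \boldsymbol{e}_i^\top V^\top S V \boldsymbol{e}_i = (V\boldsymbol{e}_i)^\top S (V\boldsymbol{e}_i) > 0$ since $V \boldsymbol{e}_i$ is a nonzero vector; symmetrically for negative-definite. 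Hence no diagonal element can ever be orthogonally zeroed, contrapositively giving (c) $\Rightarrow$ (a).

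I expect the only real subtlety — not an obstacle so much as a bookkeeping point — is keeping the reduction to the symmetric case clean: one must be careful that "orthogonally zeroing a diagonal element of $M$" and "of $M + M^\top$" genuinely coincide (this is exactly the content of Note \ref{N:sufSym}, since the diagonals of $V^\top M V$ and $V^\top(M+M^\top)V$ are proportional entrywise, in fact the latter is twice the symmetric part of the former and the diagonal of $V^\top M^\top V$ equals that of $V^\top M V$), and that $W(M)$ for a general real $M$ already equals $W\!\big(\tfrac12(M+M^\top)\big)$ because $\boldsymbol{v}^\top M \boldsymbol{v} = \boldsymbol{v}^\top M^\top \boldsymbol{v}$. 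With that reduction in hand, everything reduces to the spectral theorem plus the intermediate value theorem, so the proof is short. The step requiring the most care to state precisely is (b) $\Rightarrow$ (c) for an \emph{arbitrary} diagonal index, where the permutation-similarity invariance of the diagonal multiset from Section \ref{Subsect:invEquiv} does the work.
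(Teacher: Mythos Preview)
Your proof is correct and follows essentially the same approach as the paper: both hinge on the observation that a unit vector $\boldsymbol{v}$ with $\boldsymbol{v}^\top M \boldsymbol{v}=0$ can be installed as any column of an orthogonal matrix to zero the corresponding diagonal entry. The only cosmetic differences are that the paper places $\boldsymbol{v}$ directly in the $k$th column (rather than the first column followed by a permutation), and closes the cycle via $(c)\Rightarrow(b)$ using the facts that diagonal entries lie in the numerical range and that $W(\cdot)$ is an orthogonal similarity invariant, whereas you close it via the contrapositive $(c)\Rightarrow(a)$; your explicit spectral-theorem-plus-IVT argument for $(a)\Rightarrow(b)$ simply spells out what the paper compresses into ``follows straightforwardly from Note~\ref{N:npdnndEquiv}.''
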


\begin{proof}
$(a) \Rightarrow (b)$ Since $M$ derives its nondefiniteness from its Hermitian part, we may assume $M$ is Hermitian. Thus, $W(M)$ is a real closed interval. By Note \ref{N:npdnndEquiv}$(b)$, $M$ is indefinite or singular. In either case, some eigenvalue of $M$ is nonpositive and some eigenvalue of $M$ is nonnegative. Since $W(M)$ is convex and contains all eigenvalues of $M$, $0 \in W(M)$.

$(b) \Rightarrow (c)$ In the unitary case, if $0 \in W(M)$ then $\boldsymbol{v}^* M \boldsymbol{v}=0$ for some unit $\boldsymbol{v} \in \mathbb{C}^{n}$. Then we can construct a unitary matrix $U$ where any $k$th column is $\boldsymbol{v}$, which implies the $k$th element of the diagonal of $U^* M U$ is 0. The orthogonal case follows from the same argument where $\boldsymbol{v}$ can chosen to be real in light of Note \ref{N:sufSym} and $W_\mathbb{R}(M) = W(M)$; alternatively, Note \ref{N:realUnitary} may be invoked.

$(c) \Rightarrow (a)$ If $(U^* M U)_{k,k} = 0$ for some unitary $U$, then $\boldsymbol{v}^* M \boldsymbol{v} = 0$, where $\boldsymbol{v}$ is the $k$th column of~U. This implies neither $M$ nor $-M$ is positive-definite, so $M$ must be nondefinite by Note \ref{N:npdnndEquiv}. The real/orthogonal case follows similarly.
\end{proof}

Proposition \ref{P:zdndEquiv} is to nondefinite operators what Theorem~1 of \cite{pF69} is to traceless operators; indeed, the former considers the diagonal when $0 \in W(M)$, the latter considers the diagonal when $\tr(M) \in W(M)$, and hollowization concerns the diagonal when $\tr(M) = 0$. Besides being useful for our ends, Proposition \ref{P:zdndEquiv} is important because the determination of necessary or sufficient conditions under which 0 is in the numerical range of an operator motivates much significant research. \cite{pP02} 

Operator-theoretic research analogous to Proposition \ref{P:zdndEquiv} for the Hardy space of the open unit disc is given in \cite{pB02} where conditions under which 0 is in the numerical range of a composition operator is derived.

Notice the proof of $(b) \Leftrightarrow (c)$ in Proposition \ref{P:zdndEquiv} works for any $z \in \mathbb{C}$ in place of 0 so that $z \in W(M)$ if and only if for all $k$, there exists unitary $U$ such that $(U^* M U)_{k,k} = z$. Indeed, a useful way to think of $W(M)$ for our purposes is as the set of all diagonal elements among all matrices unitarily similar to $M$, that is $W(M) = \{m \in \diag(M') : \forall \, M' \sim_U M\}$. Moreover, notice $z \in W(M)$ for all $M \in \mathbb{C}^{n \times n}$ if and only if $z = \frac{\tr M}{n}$. So the spectral mean $\frac{\tr M}{n}$ of $M$, which is also the barycenter of $W(M)$,\footnote{See \cite{sB07}.} is the unique $z \in \mathbb{C}$ that can be unitarily/orthogonally introduced to the diagonal of any complex/real matrix $M$. Far stronger conclusions are given for real $M$ and for Hermitian $M$ in Theorem \ref{T:conHol}.

\subsection{Considerations and Limitations on Conjugate Zeroing Diagonal Elements of Nondefinite Operators} \label{Subsect:limConjZer}

Proposition \ref{P:zdndEquiv} indicates a diagonal element of a nondefinite matrix can always be zeroed, but one may wonder if a diagonal element from each of two nondefinite matrices can be \emph{conjugate} zeroed. The answer is negative, and a counterexample for the orthogonal case with symmetric $L,M \in \mathbb{R}^{2 \times 2}$ is given by the pair 
\begin{equation}\label{E:CanonCounterexamp}
L= \begin{psmallmatrix}
1&0\\
0&-1\\
\end{psmallmatrix}
\quad
M = \begin{psmallmatrix}
0&1\\
1&0\\
\end{psmallmatrix}
\end{equation}
which is also the canonical counterexample (see \cite{tD20,lB61}) exhibiting not all pairs of real traceless matrices can be simultaneously orthogonally hollowized, so further serves as a counterexample to the claim that a diagonal element from each of any pair of real nondefinite matrices can be simultaneously orthogonally zeroed. 

One may wonder if (\ref{E:CanonCounterexamp}) serves as a counterexample by virtue of $L,M$ being restricted to two dimensions. This is not the case, and (\ref{E:nonzeroablePair1}) gives nondefinite symmetric $L,M \in \mathbb{R}^{3 \times 3}$ for which there is no orthogonal $R$ such that $R^\top L R$ and $R M R^\top$ both have a diagonal 0, and the same example gives nondefinite Hermitian $L,M \in \mathbb{C}^{3 \times 3}$ for which there is no unitary $U$ such that $U^* L U$ and $U M U^*$ both have a diagonal 0.
\begin{equation}\label{E:nonzeroablePair1}
L= \begin{psmallmatrix}
2&-1&-1\\
-1&1&0\\
-1&0&1
\end{psmallmatrix}
\quad
M = \begin{psmallmatrix}
1&0&0\\
0&1&1\\
0&1&1
\end{psmallmatrix}
\end{equation}
We can see no such $R$ exists by noticing in order for $R^\top L R$ to have a diagonal 0, all components of some column of $R$ must be equal, but in order for $R M R^\top$ to have a diagonal 0, some row of $R$ must be a permutation of $(0,-x,x)$. It is not possible for both such vectors to have unit length; thus, $R$ cannot be orthogonal. No such $U$ exists for the same reasons.

Furthermore, even if there exists an orthogonal $R$ such that $R^\top L R$ and $R M R^\top$ both have a diagonal 0, this does not imply for \emph{any} diagonal element of $R^\top L R$ and \emph{any} diagonal element of $R M R^\top$ there exists an orthogonal $R$ where both are 0. For instance, using an argument similar to that of the previous example, it is straightforward to show for 
\begin{equation}\label{E:nonzeroablePair2}
L= M = \begin{psmallmatrix}
1&0&0\\
0&1&1\\
0&1&1
\end{psmallmatrix},
\end{equation}
there exists an orthogonal $R$ such that $(R^\top L R)_{1,1} = (R M R^\top)_{i,i} = 0$ for $i = 1$ but not $i = 2,3$, and there exists a unitary $U$ such that $(U^* L U)_{1,1} = (U M U^*)_{i,i} = 0$ for $i = 1$ but not $i = 2,3$.\footnote{For the $i = 1$ case, a solution is $R = U = -\frac{1}{2}\begin{psmallmatrix}
0&\sqrt{2}&-\sqrt{2}\\
\sqrt{2}&1&1\\
-\sqrt{2}&1&1
\end{psmallmatrix}$.}

More general limitations exist even in the traceless case. In Note \ref{N:reqAss}$(a)$, we specify a class of traceless matrix pairs $L,M \in \mathbb{R}^{n \times n}$ for each $n \ge 2$ such that, given any combination of $l$ and $m \in \{m_1,m_2\}$ with $m_1 \neq m_2$, there is no orthogonal $R$ where $(R^\top L R)_{m_1,m_1} = (R^\top L R)_{m_2,m_2} = 0$ and $(R M R^\top)_{l,l} = 0$. Note \ref{N:sufSym} can be used to extend this construction to a symmetric class. In Note \ref{N:reqAss}$(b)$, we specify a class of traceless nonhermitian matrix pairs $L,M \in \mathbb{C}^{n \times n}$ for each $n \ge 2$ such that, given any combination of $l \in \{l_1,l_2\}$ with $l_1 \neq l_2$ and $m \in \{m_1,m_2\}$ with $m_1 \neq m_2$, there is no unitary $U$ such that $(U^* L U)_{m,m} = 0$ and $(U M U^*)_{l,l} = 0$.

\section{Hollowization and Traceless Operators} \label{Sect:holTraceless}

Hollowization is to traceless operators what zeroing a diagonal element is to nondefinite operators. All traceless operators are nondefinite, but being significantly more restrictive, their entire diagonal may be zeroed.

In \cite{pF69}, Fillmore investigates introducing 0s to the diagonal of a complex matrix $M$. He proves there exists a unitary $U$ such that $\diag(U^* M U) = (0,...,0, \tr M)$ if and only if ${\tr(M) \in W(M)}$, and, as a corollary, every complex traceless matrix is unitarily similar to a hollow matrix. The method of proof he employs is used in \cite{tD20} and is also used in our Lemma~\ref{L:conZer}, so we include it below.

\begin{theorem} [Fillmore's Theorem]\label{T:fillThm}
Operator $M \in \mathbb{C}^{n \times n}$ with $n \ge 2$ is unitarily similar to some~$M'$ with $\diag(M') = (0,...,0,\tr M)$ if and only if $\tr(M) \in W(M)$.
\end{theorem}

\begin{proof}
$(\Rightarrow)$ The conclusion from the fact that every diagonal element of a matrix must lie in its numerical range.

$(\Leftarrow)$ The assumption $\tr(M) \in W(M)$ implies $\boldsymbol{v}^* M \boldsymbol{v} = \tr(M)$ for some unit vector $\boldsymbol{v}$, so we may extend $\boldsymbol{v}$ to a unitary matrix $U$ whose first column is $\boldsymbol{v}$. Then $U^* M U = \begin{psmallmatrix} \tr M & \star \\ \star & N \end{psmallmatrix}$ for some ${N \in \mathbb{C}^{(n-1) \times (n-1)}}$, which implies $\tr(N) = 0$, so the spectral mean $\frac{\tr N}{n} = 0$. Since the spectral mean of any matrix lies inside its numerical range, $\tr(N) \in W(N)$.\footnote{See the discussions following Proposition \ref{P:zdndEquiv}.} Thus, we may continue with $N$ as we did with $M$ and proceed until the last diagonal element is 0. Finally, conjugating by a permutation matrix can move $\tr M$ anywhere along the diagonal.
\end{proof}

If $M$ is real, we may assume $M'$ is orthogonally similar to $M$ by Note \ref{N:realUnitary}. The orthogonal case takes a more prominent role in \cite{tD20} and in Section \ref{Sect:conjHol}, where significant differences from the general complex case begin to manifest.

\section{Conjugate Hollowization -- Zeroing Diagonals via $(U^* L U, U M U^*)$} \label{Sect:conjHol}

In \cite{tD20}, Damm and Fa\ss bender investigate introducing 0s to the diagonals of a pair of matrices $(L,M)$ via $(U^* L U, U^* M U)$ and prove, for real traceless $L,M$, there exists an orthogonal $R$ such that $R^\top L R$ is hollow and $R^\top M R$ is almost hollow. They also prove, for complex Hermitian traceless $L,M,N$, there exists a unitary $U$ such that $U^* L U$ is hollow, $U^* M U$ is hollow, and $U^* N U$ is almost hollow. These are instances of simultaneous hollowization and simultaneous (almost) hollowization. They also pose their Conjecture 22 that, for all real traceless $L,M$, there exists an orthogonal $R$ such that $R^\top L R$ is hollow and $R M R^\top$ is almost hollow.

In this section, we investigate introducing 0s to the diagonals of a pair $(L,M)$ via $(U^* L U, U M U^*)$. Let $L,M \in \mathbb{C}^{n \times n}$ with $n \ge 0$, $l = \frac{\tr L}{n} $, and $m= \frac{\tr M}{n}$. In Theorem \ref{T:conHol}, we prove $L,M$ are real implies there exists orthogonal $R$ such that $\diag(R^\top L\, R) = (l,...,l,l,l)$ and ${\diag(R \, M R^\top) = (m,...,m,*,*)}$, and $L,M$ are Hermitian implies there exists unitary $U$ such that $\diag(U^* L\, U) = (l,...,l)$ and $\diag(U M U^*) = (m,...,m)$. By Definition \ref{D:holDefs}, Definition \ref{D:twiHollowize}, and Note \ref{N:transDiag}, this is equivalent to finding an (almost) conjugate orthogonal hollowization for real $L,M$ and a conjugate unitary hollowization for Hermitian $L,M$. Thus, Theorem \ref{T:conHol} implies Conjecture 22 in \cite{tD20}. 

We first review some results regarding the joint numerical range and prove three lemmas for conjugate zeroing diagonal elements. With $L,M \in \mathbb{C}^{n \times n}$, recall the \emph{joint numerical range} of $(L,M)$ is $W(L,M)=\{ (\boldsymbol{v}^* L \boldsymbol{v}, \boldsymbol{v}^* M \boldsymbol{v}) : \boldsymbol{v} \in {\mathbb{C}}^{n}, \boldsymbol{v}^* \boldsymbol{v}=1\}$ and the \emph{real joint numerical range} of $(L,M)$ is $W_\mathbb{R}(L,M)=\{ (\boldsymbol{v}^\top L \boldsymbol{v}, \boldsymbol{v}^\top M \boldsymbol{v}) : \boldsymbol{v} \in {\mathbb{R}}^{n}, \boldsymbol{v}^\top \boldsymbol{v}=1\}$. For $L,M \in \mathbb{R}^{n \times n}$ with $n \ge 3$, $W_\mathbb{R}(L,M)$ is convex as proven by Brickman in \cite{lB61} using topological methods; another proof using topological methods is given in \cite{pB85}, a constructive proof is given in \cite{tD20}, and more elementary proofs appealing only to connectivity properties of quadrics in $\mathbb{R}^3$ are given in \cite{vY71,hP04,jM05}. If $L,M$ are also symmetric, then $W_\mathbb{R}(L,M) = W(L,M)$. \cite{lB61} For Hermitian $L,M \in \mathbb{C}^{n \times n}$ with $n \ge 0$, $W(L,M)$ is real and convex as proven in \cite{pB85}; closely related convexity results are given in \cite{cL00,eG04,pB91,cL20,yP94}. 

Notice it follows if either $L,M \in \mathbb{R}^{n \times n}$ or $L,M \in \mathbb{C}^{n \times n}$ are Hermitian with $n \ge 3$, then $W(L,M)$ is convex and $W_\mathbb{R}(L,M) = W(L,M)$. The condition $n \ge 3$ is required as $W_\mathbb{R}(L,M)$ may not even be convex for $n = 2$; for example, for $L,M$ from (\ref{E:CanonCounterexamp}), $W_\mathbb{R}(L,M)$ is the unit circle. \cite{lB61,tD20}

Lemma \ref{L:jointSpecMeans} is a generalization of Proposition~5 from \cite{tD20}, where a short proof invoking Brickman's theorem as well as a novel constructive proof is given for the case $\tr A = \tr B = 0$. A constructive proof of Brickman's theorem is also given in \cite{tD20}. This suggests an additional, constructive proof for our lemma.

\begin{lemma} [Conditions for $\frac{1}{n}(\tr A,\tr B) \in W(A,B)$]\label{L:jointSpecMeans}
For all $A,B \in \mathbb{R}^{n \times n}$ with $n \ge 3$ and for all Hermitian $A,B \in \mathbb{C}^{n \times n}$ with $n \ge 0$, there exists unit $\boldsymbol{v} \in \mathbb{R}^n$ such that $\boldsymbol{v}^\top A \boldsymbol{v} = \frac{\tr A}{n}$ and $\boldsymbol{v}^\top B \boldsymbol{v} = \frac{\tr B}{n}$.
\end{lemma}

\begin{proof}
For $A,B \in \mathbb{R}^{n \times n}$ with $n \ge 3$, subtracting out the trace of $A$, using Note 3.2, and using the fact every real traceless matrix can be orthogonally hollowized, we may assume $A$ is hollow without loss of generality. Note there exist $b_1,b_2 \in \diag(B)$ such that $b_1 \le \frac{\tr B}{n}$ and $b_2 \ge \frac{\tr B}{n}$. However, this implies there exist standard basis vectors $\boldsymbol{u}_1,\boldsymbol{u}_2$ such that $(\boldsymbol{u}_1^\top A \boldsymbol{u}_1,\boldsymbol{u}_1^\top B \boldsymbol{u}_1) = (0,b_1)$ and $(\boldsymbol{u}_2^\top A \boldsymbol{u}_2,\boldsymbol{u}_2^\top B \boldsymbol{u}_2) = (0,b_2)$. Since $W_\mathbb{R}(A,B)$ is convex, there exists a unit $\boldsymbol{v}$ where $\boldsymbol{v}^\top A \boldsymbol{v} = 0 = \frac{\tr A}{n}$ and $\boldsymbol{v}^\top B \boldsymbol{v} = \frac{\tr B}{n}$.

For Hermitian $A,B \in \mathbb{C}^{n \times n}$ with $n \ge 0$, the proof is similar to that of $(a)$ as every complex traceless matrix can be unitarily hollowized, Hermitian matrices have real diagonals, and $W(L,M)$ is real and convex.
\end{proof}

Lemma \ref{L:jointSpecMeans} provides useful conditions determining when the joint numerical range of two matrices contains the ordered pair of their spectral means, and the spectral mean $\frac{\tr M}{n}$ of a matrix $M$ of size $n$ is the barycenter of $W(M)$. Thus, Lemma \ref{L:jointSpecMeans} generalizes commentary following Proposition~\ref{P:zdndEquiv}. In particular, we may conclude $z \in W(A,B)$ for all $A,B \in \mathbb{R}^{n \times n}$ with $n \ge 3$ and for all Hermitian $A,B \in \mathbb{C}^{n \times n}$ with $n \ge 0$ if and only if $z = \frac{1}{n}(\tr A,\tr B)$, making $\frac{1}{n}(\tr A,\tr B)$ the unique point whose components can be simultaneously introduced to the diagonals of any pair of real or complex Hermitian matrices.

Lemma \ref{L:hHComp} solves a simple matrix completion problem from any real unit vector into a real Householder matrix. We follow \cite{gG96} for conventions regarding Householder matrices and vectors.

\begin{lemma} [Householder Completion] \label{L:hHComp}
Let $n \ge 2$. For any unit $\boldsymbol{v} \in \mathbb{R}^n$,  there exists a real Householder matrix $H = H(\boldsymbol{v})$ whose first column is $\boldsymbol{v}$.

As an explicit construction, define vector $\boldsymbol{h} = (h_1,...,h_n)$ such that
\begin{equation}\label{E:hHComp}
h_1 = \pm\frac{1}{\sqrt{2}}\sqrt{1-v_1}, \quad
h_k = -\frac{1}{2} \frac{v_k}{h_1} \text{ for } k \neq 1
\end{equation}
in the case $v_1 \neq 1$, and $\boldsymbol{h}$ to be any standard basis vector or its opposite with $h_1 = 0$ in the case $v_1 = 1$. Vector $\boldsymbol{h}$ is a real unit $n$-vector, and the Householder matrix with Householder vector $\boldsymbol{h}$ has the properties of $H$.
\end{lemma}

\begin{proof}
The proof is given by the construction. It is straightforward to algebraically verify $\boldsymbol{h}$ is a real unit $n$-vector, then the canonical equation $H = I - 2 \boldsymbol{h} \boldsymbol{h}^*$ specifies $H$. \cite{gG96} It is straightforward to algebraically verify the first column of $H$ is $\boldsymbol{v}$.
\end{proof}

Crucially, recall real Householder matrices are symmetric and orthogonal. Lemma \ref{L:hHComp} implies we may rely on Householder transformations, which have very nice properties, for transforming diagonal elements.

We may combine Lemma \ref{L:jointSpecMeans} and Lemma \ref{L:hHComp} to build up a conjugate zeroing scheme.

\begin{lemma}[Conjugate Zeroing]\label{L:conZer}
\,
\begin{enumerate}[(a)]
\item For all $L,M \in \mathbb{R}^{n \times n}$ with $n \ge 3$, there exists orthogonal $R$ such that $\diag(R^\top L\, R) = (l,l,...,l)$ and $\diag(R \, M \,R^\top) = (m,*,...,*)$ for $l = \frac{\tr L}{n} $ and $m= \frac{\tr M}{n}$.

\item For all Hermitian $L,M \in \mathbb{C}^{n \times n}$ with $n \ge 0$, there exists unitary $U$ such that $\diag(U^* L\, U) = (l,l,...,l)$ and $\diag(U \, M \,U^*) = (m,*,...,*)$ for $l = \frac{\tr L}{n} $ and $m= \frac{\tr M}{n}$.
\end{enumerate}
\end{lemma}

\begin{proof}
$(a)$ As in the proof of Theorem \ref{T:fillThm} and proofs of claims made in \cite{pF69,tD20}, the canonical way  to construct an orthogonal $R \in \mathbb{R}^{n \times n}$ such that $\diag(R^\top L\, R) = (l,...,l,l,l)$ is to construct a sequence of orthogonal $R_i$ such that $R = R_1...R_{n-1}$ where
\begin{align}
\begin{split}
    &\diag(R_1^\top L\, R_1) = (l,*,...,*)  \\
    &\diag(R_2^\top R_1^\top L\, R_1 R_2) = (l,l,*,...,*)    \\
    &\hspace{40pt} \vdots    \\
    &\diag(R_{n-1}^\top ... R_1^\top L\, R_1 ... R_{n-1}) = \diag(R^\top L\, R) = (l,...,l).
\end{split}
\end{align}
For $n \ge 3$, using Lemma \ref{L:jointSpecMeans}, we may replace $R_{n-1}$ with $H(\boldsymbol{v})$ from Lemma \ref{L:hHComp} where $\boldsymbol{v}$ satisfies $\boldsymbol{v}^\top (R_{n-2}^\top ... R_1^\top L\, R_1 ... R_{n-2}) \boldsymbol{v} = l$ and $\boldsymbol{v}^\top M \boldsymbol{v} = m$.

$(b)$ The proof is similar to that of $(a)$ with unitary $U_i$ in place of orthogonal $R_i$ and where $n \ge 3$ can be relaxed to $n \ge 0$ using Lemma \ref{L:jointSpecMeans}.
\end{proof}

Notice Lemma \ref{L:conZer} already generalizes Theorem \ref{T:fillThm}. However, Theorem \ref{T:conHol} takes us much further.

We are now in the position to prove our main result, Theorem~\ref{T:conHol}. We prove Theorem~\ref{T:conHol}$(a)$ by induction on size $n$ and then show Theorem \ref{T:conHol}$(b)$ has a similar proof.

\begin{theorem}[Conjugate Hollowization]\label{T:conHol}
Let $n \ge 0$.
\begin{enumerate}[(a)]
\item For all $L,M \in \mathbb{R}^{n \times n}$, there exists orthogonal $R$ such that $\diag(R^\top L\, R) = (l,...,l,l,l)$ and $\diag(R \, M \,R^\top) = (m,...,m,*,*)$ for $l = \frac{\tr L}{n} $ and $m= \frac{\tr M}{n}$.

\item For all Hermitian $L,M \in \mathbb{C}^{n \times n}$, there exists unitary $U$ such that $\diag(U^* L\, U) = (l,...,l)$ and $\diag(U M U^*) = (m,...,m)$ for $l = \frac{\tr L}{n}$ and $m= \frac{\tr M}{n}$.
\end{enumerate}
\end{theorem}

\begin{proof}
$(a)$ The theorem is tautological for $n = 0,1$, so assume $n \ge 2$. Using Lemmas \ref{L:jointSpecMeans} and \ref{L:hHComp}, let $\psi_{A,B}(\_) = H_{A,B} \, \_ \, H_{A,B}$ for $A,B \in \mathbb{R}^{n \times n}$ denote conjugation by $H_{A,B}$ where $H_{A,B}$ is a Householder matrix whose first column is $\boldsymbol{v}_{A,B}$ such that $\boldsymbol{v}_{A,B}^\top A \boldsymbol{v}_{A,B} = \frac{\tr A}{n}$ and $\boldsymbol{v}_{A,B}^\top B \boldsymbol{v}_{A,B} = \frac{\tr B}{n}$.

Let $\phi$ denote conjugation by $R$ where $\phi(\_)=R^\top \_\, R$ and $\phi^\top(\_)=R \, \_ \,R^\top$, and let $\phi_k$ with $3 \leq k \leq n$ be this conjugation action on the $k \times k$ lower diagonal block and the identity elsewhere so the last $k$ elements of $\diag(\phi_k(\_))$ are equal and the last $k$ elements of $\diag(\phi_k^\top(\_))$ are equal except for, at most, the last two. Notice $\phi_n = \phi$. We will prove $\phi$ exists via induction on size $n$.

\noindent {\bfseries Base Case:} $n = 3$

By Lemma \ref{L:conZer}, there exists $\phi$ such that $\diag(\phi(L)) = (l,l,l)$ and $\diag(\phi^\top(M)) = (m,*,*)$.

\noindent {\bfseries Induction Hypothesis}

For all $L,M \in \mathbb{R}^{(n-1) \times (n-1)}$, there exists $\phi$ such that $\diag(\phi(L)) = (l,...,l,l,l)$ and $\diag(\phi^\top(M)) = (m,...,m,*,*)$.

\noindent {\bfseries Induction Consequent}

We now prove $\phi$ exists for all $L,M \in \mathbb{R}^{n \times n}$. The key is to consider $\phi = \phi_{n-1} \, \circ \, \psi_{A,B} \, \circ \, \phi_{n-1}$ where $A = \phi_{n-1}(L)$ and $B = \phi_{n-1}^\top(M)$. Using a hat $\, \hat{} \,$ as a label to distinguish applications of $\phi_{n-1}$, by the induction hypothesis, there exist $\phi_{n-1}, \hat{\phi}_{n-1}$ such that
\begin{flalign}
\hat{\phi}_{n-1} \, \circ \, \psi_{A,B} \, \circ \, \phi_{n-1}(L) 
&= \hat{\phi}_{n-1} \, \circ \, \psi_{A,B}(\begin{pNiceMatrix}[small,xdots/shorten=7pt,xdots/radius=0.7pt,margin=0.1em] 
*&&&&&\\
&a&&&&\\
&&\phantom{a}&&&\\
&&&a&&\\
&&&&a&\\
&&&&&a
    \CodeAfter
    \line{2-2}{4-4}
\end{pNiceMatrix})
= \hat{\phi}_{n-1}(\begin{pNiceMatrix}[small,xdots/shorten=7pt,xdots/radius=0.7pt,margin=0.1em] 
l&&&&&\\
&*&&&&\\
&&\phantom{*}&&&\\
&&&*&&\\
&&&&*&\\
&&&&&*
    \CodeAfter
    \line{2-2}{4-4}
\end{pNiceMatrix})
=\mathrlap{\begin{pNiceMatrix}[small,xdots/shorten=10pt,xdots/radius=0.7pt,margin,margin=0.2em] 
\hspace{.08em}l\hspace{.08em}&&&&&\\
&\phantom{\hspace{.08em}l\hspace{.08em}}&&&&\\
&&\phantom{\hspace{.08em}l\hspace{.08em}}&&&\\
&&&\hspace{.08em}l\hspace{.08em}&&\\
&&&&\hspace{.08em}l\hspace{.08em}&\\
&&&&&\hspace{.08em}l\hspace{.08em}
    \CodeAfter
    \line{1-1}{4-4}
\end{pNiceMatrix}}&&\label{E:transL}
\\
\hat{\phi}_{n-1}^\top \, \circ \, \psi_{A,B} \, \circ \, \phi_{n-1}^\top(M) 
&= \hat{\phi}_{n-1}^\top \, \circ \, \psi_{A,B}(\begin{pNiceMatrix}[small,xdots/shorten=7pt,xdots/radius=0.7pt,margin=0.1em]
*&&&&&\\
&b&&&&\\
&&\phantom{b}&&&\\
&&&b&&\\
&&&&*&\\
&&&&&*
    \CodeAfter
    \line{2-2}{4-4}
\end{pNiceMatrix})
= \hat{\phi}_{n-1}^\top(\begin{pNiceMatrix}[small,xdots/shorten=7pt,xdots/radius=0.7pt,margin=0.1em]
m&&&&&\\
&*&&&&\\
&&\phantom{*}&&&\\
&&&*&&\\
&&&&*&\\
&&&&&*
    \CodeAfter
    \line{2-2}{4-4}
\end{pNiceMatrix})
=\mathrlap{\begin{pNiceMatrix}[small,xdots/shorten=10pt,xdots/radius=0.7pt,margin,margin=0.1em]
m&&&&&\\
&\phantom{l}&&&&\\
&&\phantom{l}&&&\\
&&&m&&\\
&&&&*&\\
&&&&&*
    \CodeAfter
    \line{1-1}{4-4}
\end{pNiceMatrix}}&&\label{E:transM}
\end{flalign}
for some constants $a$ and $b$.

Let $M_{n-1}$ and $\hat{M}_{n-1}$ be the $(n-1) \times (n-1)$ lower diagonal blocks of $M$ and $\psi_{A,B} \, \circ \, \phi_{n-1}^\top(M)$, respectively. Eqs. (\ref{E:transL}) and (\ref{E:transM}) imply, for every $M \in R^{n \times n}$, so for every $M_{n-1} \in R^{(n-1) \times (n-1)}$, there exists an induced $\hat{M}_{n-1}$. But since $\hat{M}_{n-1} \in R^{(n-1) \times (n-1)}$, we can swap $\hat{M}_{n-1} \leftrightarrow M_{n-1}$. That is, if~(\ref{E:transM}) exists for every $M$, then (\ref{E:transM}) exists for every $M$ with $\hat{M}_{n-1}$ as its $(n-1) \times (n-1)$ lower diagonal block. Thus, there exist $\phi_{n-1}, \hat{\phi}_{n-1}$ satisfying (\ref{E:transL}) and
\begin{equation}
\phi_{n-1}^\top \, \circ \, \psi_{A,B} \, \circ \, \hat{\phi}_{n-1}^\top(M) = ... =\begin{pNiceMatrix}[small,xdots/shorten=10pt,xdots/radius=0.7pt,margin=0.2em]
m&&&&&\\
&\phantom{l}&&&&\\
&&\phantom{l}&&&\\
&&&m&&\\
&&&&*&\\
&&&&&*
    \CodeAfter
    \line{1-1}{4-4}
\end{pNiceMatrix}\label{E:transM2}
\end{equation}
(notice the position of the hat $\, \hat{} \,$ between (\ref{E:transM}) and (\ref{E:transM2})). But (\ref{E:transM2}) is the transpose of (\ref{E:transL}). That is, letting (\ref{E:transL}) be $\phi$, (\ref{E:transM2}) is $\phi^\top$, so we have proven the desired $\phi$ exists.

$(b)$ The proof is similar to that of $(a)$ but with complex unitary transformations in place of real orthogonal ones. The theorem is tautological for $n = 0,1$, so assume $n \ge 2$.

Following the proof of $(a)$, let $\psi_{A,B}(\_) = H_{A,B} \, \_ \, H_{A,B}$ for Hermitian $A,B \in \mathbb{C}^{n \times n}$. Let $\phi(\_)=U^* \_\, U$ and $\phi^*(\_)=U \, \_ \,U^*$, and let $\phi_k$ with $0 \leq k \leq n$ be this conjugation action on the $k \times k$ lower diagonal block and the identity elsewhere. We will again prove $\phi$ exists via induction on $n$.

For the base case $n = 2$, by Lemma \ref{L:conZer}, there exists $\phi$ such that $\diag(\phi(L)) = (l,l)$ and $\diag(\phi^*(M)) = (m,m)$. The induction hypothesis is, for all Hermitian $L,M \in \mathbb{C}^{(n-1) \times (n-1)}$, there exists $\phi$ such that $\diag(\phi(L)) = (l,...,l)$ and $\diag(\phi^*(M)) = (m,...,m)$.

For the induction consequent, we now prove $\phi$ exists for all Hermitian $L,M \in \mathbb{C}^{n \times n}$ by considering $\phi = \phi_{n-1} \, \circ \, \psi_{A,B} \, \circ \, \phi_{n-1}$ where $A = \phi_{n-1}(L)$ and $B = \phi_{n-1}^*(M)$. Again using a hat $\, \hat{} \,$ to distinguish applications of $\phi_{n-1}$, there exist $\phi_{n-1}, \hat{\phi}_{n-1}$ such that
\begin{align}
\hat{\phi}_{n-1} \, \circ \, \psi_{A,B} \, \circ \, \phi_{n-1}(L) 
&= \hat{\phi}_{n-1} \, \circ \, \psi_{A,B}(\begin{pNiceMatrix}[small,xdots/shorten=7pt,xdots/radius=0.7pt,margin=0.2em] 
*&&&\\
&a&&\\
&&\phantom{a}&\\
&&&a
    \CodeAfter
    \line{2-2}{4-4}
\end{pNiceMatrix})
= \hat{\phi}_{n-1}(\begin{pNiceMatrix}[small,xdots/shorten=7pt,xdots/radius=0.7pt,margin=0.2em] 
l&&&\\
&*&&\\
&&\phantom{*}&\\
&&&*
    \CodeAfter
    \line{2-2}{4-4}
\end{pNiceMatrix})
=\begin{pNiceMatrix}[small,xdots/shorten=7pt,xdots/radius=0.7pt,margin=0.2em] 
\hspace{.08em}l\hspace{.08em}&&&\\
&\hspace{.08em}l\hspace{.08em}&&\\
&&\phantom{\hspace{.08em}l\hspace{.08em}}&\\
&&&\hspace{.08em}l\hspace{.08em}
    \CodeAfter
    \line{2-2}{4-4}
\end{pNiceMatrix}\label{E:transLComp}
\\
\hat{\phi}_{n-1}^* \, \circ \, \psi_{A,B} \, \circ \, \phi_{n-1}^*(M) 
&= \hat{\phi}_{n-1}^* \, \circ \, \psi_{A,B}(\begin{pNiceMatrix}[small,xdots/shorten=7pt,xdots/radius=0.7pt,margin=0.2em] 
*&&&\\
&b&&\\
&&\phantom{b}&\\
&&&b
    \CodeAfter
    \line{2-2}{4-4}
\end{pNiceMatrix})
= \hat{\phi}_{n-1}^*(\begin{pNiceMatrix}[small,xdots/shorten=7pt,xdots/radius=0.7pt,margin=0.2em] 
m&&&\\
&*&&\\
&&\phantom{*}&\\
&&&*
    \CodeAfter
    \line{2-2}{4-4}
\end{pNiceMatrix})
=\begin{pNiceMatrix}[small,xdots/shorten=10pt,xdots/radius=0.7pt,margin=0.2em] 
m&&&\\
&m&&\\
&&\phantom{m}&\\
&&&m
    \CodeAfter
    \line{2-2}{4-4}
\end{pNiceMatrix}\label{E:transMComp}
\end{align}
for some constants $a$ and $b$. This implies, for the same reason as in the proof of $(a)$, there exist $\phi_{n-1}, \hat{\phi}_{n-1}$ satisfying (\ref{E:transLComp}) and
\begin{equation}
\phi_{n-1}^* \, \circ \, \psi_{A,B} \, \circ \, \hat{\phi}_{n-1}^*(M) = ... =\begin{pNiceMatrix}[small,xdots/shorten=10pt,xdots/radius=0.7pt,margin=0.2em] 
m&&&\\
&m&&\\
&&\phantom{m}&\\
&&&m
    \CodeAfter
    \line{2-2}{4-4}
\end{pNiceMatrix}\label{E:transMComp2}.
\end{equation}
But (\ref{E:transMComp2}) is the conjugate transpose of (\ref{E:transLComp}), so (\ref{E:transLComp}) specifies the desired $\phi$.
\end{proof}

Notice for traceless $L,M$, Theorem \ref{T:conHol} implies Conjecture 22 from \cite{tD20} and an equivalent statement for the complex Hermitian case. 

\begin{corollary} [Conjecture 22 from \cite{tD20} and Conjugate Hollowization] \label{C:conject22}
Let $n \ge 0$.
\begin{enumerate}[(a)]
\item For all traceless $L,M \in \mathbb{R}^{n \times n}$, there exists orthogonal $R$ such that $R^\top L\, R$ is hollow and $R \, M \,R^\top$ is almost hollow.

\item For all traceless Hermitian $L,M \in \mathbb{C}^{n \times n}$, there exists unitary $U$ such that $U^* L\, U$ and $U M U^*$ are both hollow.
\end{enumerate}
\end{corollary}

\begin{proof}
Set $\tr(L) = \tr(M) = 0$ in Theorem \ref{T:conHol} and notice $l = m = 0$.
\end{proof}

Since traceless matrices have the trace of smallest magnitude possible, we expect these matrices to be roughly ``the most nondefinite'' matrices and in a sense, the ideal nondefinite matrices. Indeed, we know from Theorem \ref{T:fillThm} and claims from \cite{pF69,tD20} that a real matrix is traceless if and only if it is orthogonally hollowizable, and a complex matrix is traceless if and only if it is unitarily hollowizable. Theorem \ref{T:conHol} implies these claims can be strengthened -- a real matrix is traceless if and only if it is orthogonally hollowizable by $R$, where $R^\top$ is used to orthogonally transform all but, at most, two diagonal elements of another matrix, and a complex Hermitian matrix is traceless if and only if it is unitarily hollowizable by $U$, where $U^*$ is used to unitarily transform all diagonal elements of another matrix. 

As Note \ref{N:reqAss} shows, whereas real $L,M$ can only be conjugate zeroed into an almost hollow pair in general, complex Hermitian $L,M$ can be fully conjugate hollowized, but the Hermitian assumption cannot be weakened in general.

\begin{note} [Required Assumptions for Conjugate Hollowization] \label{N:reqAss}
\,
\begin{enumerate}[(a)]
\item That all but at most two elements of $\diag(R M R^\top)$ are $m$ in Theorem \ref{T:conHol}$(a)$ is best-possible. For instance, there exist real symmetric traceless matrices $L,M$ for which there does not exist an orthogonal $R$ such that both $R^\top L \, R$ and $R M R^\top$ are hollow. An example is (\ref{E:CanonCounterexamp}), but we can generalize extensively. We specify a class of traceless matrix pairs $L,M \in \mathbb{R}^{n \times n}$ for each $n \ge 2$ such that, given any combination of $l$ and $m \in \{m_1,m_2\}$ with $m_1 \neq m_2$, there is no orthogonal $R$ such that $(R^\top L R)_{m_1,m_1} = (R^\top L R)_{m_2,m_2} = 0$ and $(R M R^\top)_{l,l} = 0$.

Let traceless $L,M \in \mathbb{R}^{n \times n}$ with $n \ge 2$ such that $L$ is the diagonal matrix whose diagonal is~1 in every position except $L_{l,l} = -(n-1)$, and $M$ is 0 everywhere except $M_{m_1,m_2} = 1$ for $m_1 \neq m_2$. For any $k$, it is straightforward to show $(R^\top L R)_{k,k} = 0$ implies $R_{1,k}^2 + ... + R_{n,k}^2 - n R_{l,k}^2 = 0$, and $R$ is orthogonal implies $R_{1,k}^2 + ... + R_{n,k}^2 = 1$. Together, these conditions imply, for all~$k$, $R_{l,k}^2 = \frac{1}{n}$. Thus, $(R^\top L R)_{m_1,m_1} = (R^\top L R)_{m_2,m_2} = 0$ implies $R_{l,m_1}^2 = R_{l,m_2}^2 = \frac{1}{n}$. However, this implies $R_{m_1,m_1} \neq 0$ and $R_{m_2,m_2} \neq 0$, so $(R M R^\top)_{l,l} \neq 0$.

Moreover, it is straightforward to extend this construction to a class of real symmetric traceless matrices using Note \ref{N:sufSym}.

\item The assumption $L,M$ are Hermitian in Theorem \ref{T:conHol}$(b)$ is necessary. We specify a class of traceless matrix pairs $L,M \in \mathbb{C}^{n \times n}$ for each $n \ge 2$ such that, given any combination of $l \in \{l_1,l_2\}$ with $l_1 \neq l_2$ and $m \in \{m_1,m_2\}$ with $m_1 \neq m_2$, there is no unitary $U$ such that $(U^* L U)_{m,m} = 0$ and $(U M U^*)_{l,l} = 0$. This construction is a modification and generalization of the construction in Remark 16 of \cite{tD20} specifying nonhermitian traceless $L,M \in \mathbb{C}^{n \times n}$ for each $n \ge 2$ such that $W(L,M)$ is not convex.

Let traceless $L,M \in \mathbb{C}^{n \times n}$ with $n \ge 2$ such that $L$ is the diagonal matrix whose diagonal is $\imath$ in every position except $L_{l_1,l_1} = 1$ and $L_{l_2,l_2} = -1-(n-2) \imath$ for $l_1 \neq l_2$, and $M$ is 0 everywhere except $M_{m_1,m_2} = 1$ for $m_1 \neq m_2$. It is straightforward to confirm, for all $U \in \mathbb{C}^{n \times n}$, if $(U^* L U)_{m,m} = 0$ and ($U_{l_1,m} = 0$ or $U_{l_2,m} = 0$), then column $U_{*,j}$ is the 0 vector, which precludes $U$ from being unitary. However, $(U M U^*)_{l,l} = 0$ implies $U_{l,m_1} = 0$ or $U_{l,m_2} = 0$. Thus, there is no unitary $U$ such that $(U^* L U)_{m,m} = 0$ and $(U M U^*)_{l,l} = 0$ for $l \in \{l_1,l_2\}$ and $m \in \{m_1,m_2\}$.
\end{enumerate}
\end{note}

The limitations mentioned in Note \ref{N:reqAss} are ultimately derived from the differences in the convexity of the joint numerical range when $L,M$ are real, Hermitian, or neither. In particular, let $L,M \in \mathbb{C}^{n \times n}$. If $L,M$ are real, then $W(L,M)$ is guaranteed to be convex only for all $n \ge 3$. If $L,M$ are Hermitian, the $W(L,M)$ is convex for $n \ge 0$. If $L,M$ are neither real nor Hermitian, then there is no $n \ge 2$ for which $W(L,M)$ is guaranteed to be convex, and an example for every $n \ge 2$ is given in Remark 16 of \cite{tD20}.

By setting $L = M$, we can derive revealing statements about zeroing the diagonal of a single operator and in particular, statements measuring the freedom and constraint present. In particular, every real matrix is orthogonally similar by some $R$ to a matrix whose diagonal is constant and is orthogonally similar by $R^\top$ to a matrix whose diagonal is constant except, at most, in its last two positions. Moreover, every complex Hermitian matrix is unitarily similar by some $U$ as well as by $U^*$ to matrices whose diagonals are constant.

\begin{corollary} [Conjugate Hollowization of a Single Matrix] \label{C:conHolOne} 
Let $n \ge 0$.
\begin{enumerate}[(a)]
\item For all $M \in \mathbb{R}^{n \times n}$, there exists orthogonal $R$ such that $\diag(R^\top M\, R) = (m,...,m,m,m)$ and $\diag(R \, M \,R^\top) = (m,...,m,*,*)$ for $m= \frac{\tr M}{n}$.

In particular, a real matrix $M$ is traceless if and only if there exists orthogonal $R$ such that $R^\top M R$ is hollow and $R \, M R^\top$ is almost hollow.
\item For all Hermitian $M \in \mathbb{C}^{n \times n}$, there exists unitary $U$ such that $\diag(U^* M \, U) = \diag(U M U^*) = (m,...,m)$ for $m= \frac{\tr M}{n}$.

In particular, a complex Hermitian matrix $M$ is traceless if and only if there exists unitary~$U$ such that $U^* M U$ and $U M U^*$ are both hollow.
\end{enumerate}
\end{corollary}

\begin{proof}
Let $L = M$ in Theorem \ref{T:conHol} and notice $m = 0$ when $M$ is traceless.
\end{proof}

Corollary \ref{C:conHolOne} also strengthens the results from \cite{pF69,tD20} that a real matrix is traceless if and only if it is orthogonally hollowizable, and a complex Hermitian matrix is traceless if and only if it is unitarily hollowizable. In particular, Corollary \ref{C:conHolOne} implies a real matrix is traceless if and only if it is orthogonally hollowizable by some $R$ and orthogonally almost hollowizable by $R^\top$, and a complex Hermitian matrix is traceless if and only if it is unitarily hollowizable by some $U$ as well as by $U^*$.

\section{Conclusion and Outlook} \label{Sect:conc}

Our research determines conditions under which $U^{-1} M \, U$ has 0s on its diagonal, where $U$ is orthogonal for real $M$ and $U$ is unitary for complex $M$. Then, for a pair $L,M$, we determine conditions under which $U^{-1} L \, U$ and $U M U^{-1}$ have 0s on their diagonals, where $U$ is orthogonal for real $L,M$ and $U$ is unitary for complex Hermitian $L,M$.

Our primary contribution is Theorem~\ref{T:conHol} where, for $L,M \in \mathbb{C}^{n \times n}$, $l = \frac{\tr L}{n}$, and $m= \frac{\tr M}{n}$, we prove if $L,M$ are real, then there exists orthogonal $R$ such that $\diag(R^\top L\, R) = (l,...,l,l,l)$ and $\diag(R \, M \,R^\top) = (m,...,m,*,*)$, and if $L,M$ are Hermitian, then there exists unitary $U$ such that $\diag(U^* L\, U) = (l,...,l)$ and $\diag(U M U^*) = (m,...,m)$. 

This implies Corollary \ref{C:conject22} -- for traceless $L,M \in \mathbb{C}^{n \times n}$, if $L,M$ are real, then there exists orthogonal $R$ such that $\diag(R^\top L\, R) = (0,...,0,0,0)$ and $\diag(R \, M \,R^\top) = (0,...,0,*,*)$, and if $L,M$ are Hermitian, then there exists unitary $U$ such that $\diag(U^* L\, U) = \diag(U M U^*) = (0,...,0)$. The real/orthogonal case is Conjecture 22 from \cite{tD20}.

Theorem~\ref{T:conHol} also implies Corollary \ref{C:conHolOne}, which reveals the implications of Theorem~\ref{T:conHol} for a single matrix in both the real case and the complex Hermitian case. This leads to novel characterizations of traceless matrices and stronger forms of Fillmore's theorems from \cite{pF69,tD20}.

We have shown nondefinite operators are a more general context for introducing 0s to diagonals than traceless operators are. Yet, there is little further literature on this. For example, given $M$ of size $n$, we have shown nondefiniteness is necessary and sufficient to introduce one 0, \cite{pF69} provides necessary and sufficient conditions under which $n-1$ diagonal elements may be zeroed, and we know tracelessness is necessary and sufficient for all diagonal elements to be zeroed. However, are there conditions on $M$ under which $k$ diagonal elements can be orthogonally/unitarily zeroed for $2 \leq k \leq n-2$?

We also propose a conjecture we believe to be true based on the truth of our Corollary \ref{C:conject22} and Proposition 13 from \cite{tD20}.

\begin{conjecture} [Extension of Conjugate Hollowization to Three Hermitian Operators] \label{Con:extConjHol}
For all Hermitian traceless $L,M,N \in \mathbb{C}^{n \times n}$ with $n \ge 2$, there exists unitary~$U$ such that $U^* L\, U$ and $U M U^*$ are hollow, and $U^* N\, U$ is almost hollow.
\end{conjecture}

Despite its similarity to simultaneous zeroing/hollowization, conjugate zeroing/hollowization seems to require some different approaches in proof. This is illustrated by the differences between our proof of Theorem~\ref{T:conHol} and the proofs of Proposition 5 and Proposition 13 in \cite{tD20}. Conjecture~\ref{Con:extConjHol} is interesting because a proof of it may require an approach different from both what is present in this paper as well as from \cite{tD20}.

\section{Acknowledgements} \label{Sect:Acknowledgements}

Foremost gratitude is reserved for Tobias Damm and Heike Fa\ss bender, as the entirety of this work was inspired by \cite{tD20}. I would also like to extend my gratitude to \emph{The Symposium: Philosophy Community of Chicago} as well as to Edward Mogul (Loyola University) and Stephen Walker (University of Chicago) for inspiring thoughts that fluttered in the vicinity of this work. I thank the staff of \emph{The Violet Hour} for hosting the late-night musings that fed this work. Finally, the love and support of my family is the ground I stand upon.

\begin{center}
\emph{Never trust a skinny chef.}
\end{center}



\printbibliography

\end{document}